\def\mathcal{\mathscr}
\newfont{\aaa}{cmb10 at 19pt}
\newfont{\bbb}{cmb10 at 11pt}
\newcommand{\bm}[1]{\mbox{\boldmath $#1$}}
\def\v1{\vspace{1mm}}
\def\leq{\leqslant}
\def\geq{\geqslant}
\def\ZZ{\mathbb Z}
\newcommand{\beq}{\begin{equation}}
\newcommand{\eeq}{\end{equation}}
\newcommand{\bey}{\begin{eqnarray}}
\newcommand{\eey}{\end{eqnarray}}
\newcommand{\beqs}{\begin{eqnarray*}}
\newcommand{\eeqs}{\end{eqnarray*}}
\def\lm{{\textsf{Pra}(M)}}
\def\ln{{\textsf{Pra}(N)}}
\def\mc{{\mathbb C}}
\def\ad{{\hbox{\rm ad}}}
\def\cl{{\mathcal L}}
\def\bn{\underline{n}}
\def\bm{\underline{m}}
\def\bk{\underline{k}}
\def\br{\underline{r}}
\newtheorem{theo}{Theorem}
\newtheorem{defi}[theo]{Definition}
\newtheorem{lemma}[theo]{Lemma}
\newtheorem{remark}[theo]{Remark}
\def\@evenhead{
\vbox{\hbox to \textwidth {}{\hspace{0mm}{\footnotesize
\thepage}}{\hspace{8cm} {\footnotesize {Li-meng Xia, Naihong Hu}}} \protect\vspace{1truemm}\relax \hrule depth0pt
height0.15truemm width\textwidth}}
\def\@evenfoot{}
\def\@oddhead{\vbox{\hbox to \textwidth
{{\hspace{0cm}{\footnotesize Lie algebras from intersection matrices}\hfill{\footnotesize
\thepage}}\hspace{0mm}}{} \protect\vspace{1truemm}\relax\hrule
depth0pt height0.15truemm width\textwidth}}
\def\@oddfoot{}
\begin{document}

\thispagestyle{empty} \thispagestyle{fancy} {
\fancyhead[RO,LE]{\scriptsize \bf 
} \fancyfoot[CE,CO]{}}
\renewcommand{\headrulewidth}{0pt}


\setcounter{page}{1}
\qquad\\[8mm]

\noindent{\aaa{A class of Lie algebras arising from intersection matrices}}\\[1mm]

\noindent{\bbb Li-meng Xia$^{1,\dag}$,\quad Naihong Hu$^{2}$}\\[-1mm]

\noindent\footnotesize{$^1$Faculty of Science, Jiangsu University, Zhenjiang, 212013, Jiangsu Prov. China,\\
$^2$Department of Mathematics, East China Normal University, Shanghai, 200241, China}\\[6mm]


\normalsize\noindent{\bbb Abstract}\quad In present work, we find a class of Lie algebras, which are defined from the symmetrizable generalized intersection matrices. However, such algebras are different from generalized intersection matrix algebras and intersection matrix algebras. Moreover, such Lie algebras generated by  semi-positive definite matrices can be classified by the modified Dynkin diagrams.
\vspace{0.3cm}

\footnotetext{
\hspace*{5.8mm}Corresponding author: Li-meng Xia, E-mail: xialimeng@ujs.edu.cn}

\noindent{\bbb Keywords}\quad intersection matrices; extended affine Lie algebras; generators; classification\\
{\bbb MSC}\quad 17B22, 17B65\\[0.4cm]

\noindent{\bbb 1\quad Introduction}\\[0.1cm]

In the early to mid-1980s, Peter Slodowy discovered that matrices like
$$M =\left[\begin{array}{cccc}
2&-1&0&1\\
-1&2&-1&1\\
0& -2&2&-2\\
1&1&-1&2
\end{array}\right]$$
were encoding the intersection form on the second homology group of
Milnor fibres for germs of holomorphic maps with an isolated singularity
at the origin \cite{S1}, \cite{S2}. These matrices were like the generalized
Cartan matrices of Kac-Moody theory in that they had integer entries,
$2$'s along the diagonal, and $m_{ij}$ was negative if and only if $m_{ji}$ was
negative. What was new, however, was the presence of positive entries
off the diagonal. Slodowy called such matrices generalized intersection
matrices:

\begin{defi}{\rm (\cite{S1})} An $n\times n$ integer-valued matrix $M=(m_{ij})_{1\leq i,j\leq n}$ is called a
generalized intersection matrix ({\bf\emph{gim}}) if

\qquad $m_{ii}=2$,

\qquad $m_{ij}<0$ if and only if $m_{ji}<0$, and

\qquad $m_{ij}>0$ if and only if $m_{ji}>0$

{\noindent for $1\leq i, j\leq n$ with $i\not= j$.}
\end{defi}

Slodowy used these matrices to define a class of Lie algebras that
encompassed all the Kac-Moody Lie algebras:
\begin{defi}[\cite{S1},\cite{BrM}]. Given an $n\times n$ generalized intersection
matrix $M = (m_{ij})$, define a Lie algebra over $\mc$, called a generalized
intersection matrix (${\textbf{gim}}$) algebra and denoted by ${\textbf{gim}}(M)$, with:

generators: $e_1, . . . , e_n, f_1, . . . , f_n, h_1, . . . h_n$,

relations:

(R1) for $1\leq i, j \leq n$,
\beqs [h_i, e_j ] = m_{ij}e_j,&{[h_i, f_j ]} =-m_{ij}f_j,&{[e_i, f_i]} = h_i,\eeqs

(R2) for $m_{ij}\leq 0$,
\beqs [e_i, f_j ] = 0 = [f_i, e_j ],&&(\ad e_i)^{-m_{ij}+1}e_j = 0 = (\ad f_i)^{-m_{ij}+1}f_j,\eeqs

(R3) for $m_{ij} > 0$, $i\not= j$,
\beqs [e_i, e_j ] = 0 = [f_i, f_j ],&&(\ad e_i)^{m_{ij}+1}f_j = 0 = (\ad f_i)^{m_{ij}+1}e_j.\eeqs
\end{defi}

If the $M$ that we begin with is a generalized Cartan matrix, then
the $3n$ generators and the first two groups of axioms, (R1) and (R2),
provide a presentation of the Kac-Moody Lie algebras \cite{C}, \cite{GbK}, \cite{K}.

Slodowy and, later, Berman showed that the $\emph{\textbf{gim}}$ algebras are also
isomorphic to fixed point subalgebras of involutions on larger Kac-
Moody algebras \cite{S1}, \cite{Br}. So, in their words, the $\emph{\textbf{gim}}(M)$ algebras lie
both "beyond and inside" Kac-Moody algebras.

Further progress came in the 1990s as a byproduct of the work of
Berman-Moody, Benkart-Zelmanov, and Neher on the classification
of root-graded Lie algebras \cite{BrM}, \cite{BnZ}, \cite{N}. Their work
revealed that some families of intersection matrix ($\emph{\textbf{im}}$) algebras, were universal covering algebras
of well understood Lie algebras.
An $\emph{\textbf{im}}$ algebra generally is a quotient algebra of a $\emph{\textbf{gim}}$ algebra associated to the ideal generated  by homogeneous vectors those have long roots (i.e., $(\alpha,\alpha)>2$).

A handful of other researchers also began
engaging these new algebras. For example, Eswara-Moody-Yokonuma used vertex operator representations to show that $\emph{\textbf{im}}$ algebras
were nontrivial \cite{EMY}. Gao examined compact forms of $\emph{\textbf{im}}$ algebras
arising from conjugations over the complex field \cite{G}.
Berman-Jurisich-Tan showed
that the presentation of $\emph{\textbf{gim}}$ algebras could be put into a broader
framework that incorporated Borcherds algebras \cite{BrJT}.

Peng found
relations between $\emph{\textbf{im}}$ algebras and the representations of tilted algebras
via Ringel-Hall algebras \cite{P}. Especially, Peng-Xu studied the root system of GIMs in \cite{PX} and defined a new class of Lie algebras in \cite{XP}.
The Peng-Xu algebra is invariant under the action of braid group, and it can be classified by the root system when  GIM is semi-positive definite.

In present paper, for a symmetrizable generalized intersection matrix $M$, a Lie algebra $\lm$ is defined. Our construction is motivated by the \emph{${\textbf{gim}}$} algebras, $\emph{\textbf{im}}$ algebras and the extended affine Lie algebras. Such an algebra is named here by \textsf{partial reflection algebra}.  For indecomposable symmetrizable generalized intersection matrices, the partial reflection algebras have properties:

\quad\quad $\diamond$  they are quotients of \emph{${\textbf{gim}}$} algebras and different from $\emph{\textbf{im}}$ algebras;

\quad\quad $\diamond$  they can be classified by modified Dynkin diagrams for semi-positive definite case;

\quad\quad $\diamond$  if $M$ is positive, then $\lm$ is finite simple;

\quad\quad $\diamond$  if $M$ has co-rank one, then $\lm$ is an affine Lie algebra.

If there exists a diagonal non-degenerate matrix $S={\rm diag}(s_1,\cdots,s_n)$ such that $SM$ is symmetric, then $M$ is called symmetrizable. In present work, $M$ is always assumed to be a symmetrizable generalized intersection matrix with rank $n-K$.

\begin{lemma}
For symmetrizable generalized intersection matrix $M$, there exists a $\mc$-space $V$ with  symmetric non-degenerate bilinear form $(-,-)$ satisfying:
\beqs \dim V&=&n+K,\eeqs
$V$ has a prime root system $\{v_1,\cdots,v_{n}, u_1,\cdots,u_K\}$ such that
\beqs \frac{2(v_i,v_j)}{(v_i,v_i)}&=&m_{ij},\eeqs
for $1\leq i,j\leq n$.
\end{lemma}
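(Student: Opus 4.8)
The plan is to realize the $v_i$ as an honest basis of an $n$-dimensional space carrying exactly the Gram form dictated by $M$, and then to enlarge this space minimally so that the form becomes non-degenerate, the $K$ new directions furnishing $u_1,\dots,u_K$. First I would exploit symmetrizability directly. Write $S=\mathrm{diag}(s_1,\dots,s_n)$ with every $s_i\neq 0$ and $SM$ symmetric, and set $b_{ij}=s_im_{ij}$. On an abstract $n$-dimensional $\mc$-space $W$ with basis $v_1,\dots,v_n$ I define the symmetric bilinear form by the Gram matrix $G=\tfrac12 SM$, i.e.\ $(v_i,v_j)=\tfrac12 b_{ij}$; the symmetry of $G$ is precisely the symmetrizability relation $s_im_{ij}=s_jm_{ji}$. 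Then $(v_i,v_i)=s_i\neq 0$, so the defining ratio makes sense and
\[
\frac{2(v_i,v_j)}{(v_i,v_i)}=\frac{b_{ij}}{s_i}=m_{ij},
\]
which is the required identity for $1\le i,j\le n$.

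Next I would locate the radical. Since $S$ is invertible, $\mathrm{rank}\,G=\mathrm{rank}\,M=n-K$, so the radical $R=\{w\in W:(w,W)=0\}$ has $\dim R=K$. Choosing a complement $W_0$ with $W=W_0\oplus R$, the form is non-degenerate on $W_0$. I then set $V=W\oplus U$, where $U$ is a fresh $K$-dimensional space with basis $u_1,\dots,u_K$, and extend the form by declaring $U$ orthogonal to $W_0$, $(u_a,u_b)=0$, and $(u_a,\cdot)$ a perfect pairing with $R$, say $(u_a,r_b)=\delta_{ab}$ for a chosen basis $r_1,\dots,r_K$ of $R$. In the block basis $(W_0,R,U)$ the Gram matrix becomes
\[
\begin{pmatrix} G_0 & 0 & 0\\ 0 & 0 & I_K\\ 0 & I_K & 0\end{pmatrix},
\]
whose determinant is $\pm\det G_0\neq 0$; hence $(-,-)$ is non-degenerate on $V$ and $\dim V=(n-K)+K+K=n+K$, as claimed. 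This is just the standard minimal non-degenerate extension of a form with a $K$-dimensional radical, with the $u_a$ inserted as isotropic directions dual to the radical.

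Finally, the $n+K$ vectors $v_1,\dots,v_n,u_1,\dots,u_K$ are linearly independent, since the $v_i$ span $W$ while the $u_a$ lie in the complementary summand $U$; being $n+K$ independent vectors in the $(n+K)$-dimensional space $V$, they form a basis, which is the desired prime root system. I expect the only real obstacle to lie not in this linear algebra but in matching the above basis to the precise meaning of \emph{prime root system}: one must verify the intended minimality/indecomposability requirement and confirm that the $u_a$ are meant to be the isotropic (null) directions pairing with the radical. This is exactly why the orthogonal-dual extension above, rather than an arbitrary non-degenerate embedding of $W$, is the correct choice; checking that it satisfies the formal definition is the step I would treat most carefully.
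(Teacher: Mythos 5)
The paper never proves this lemma at all: it is stated bare in the introduction, as the standard realization lemma for a symmetrizable matrix (compare Kac's realization of a generalized Cartan matrix, where the ambient space likewise has dimension $2n-\mathrm{rank}\,M=n+K$), so there is no argument of the authors' to compare yours against, and your write-up serves as a complete substitute. It is also correct: symmetrizability makes $G=\frac{1}{2}SM$ a symmetric Gram matrix with $(v_i,v_i)=s_i\neq 0$, which yields $2(v_i,v_j)/(v_i,v_i)=m_{ij}$; invertibility of $S$ gives $\mathrm{rank}\,G=\mathrm{rank}\,M$, hence a radical of dimension exactly $K$; and the hyperbolic extension (isotropic vectors $u_a$ pairing dually with a basis of the radical and orthogonal to a chosen complement $W_0$) is non-degenerate because the block Gram matrix has determinant $(-1)^K\det G_0\neq 0$, on a space of dimension $n+K$ as required. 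Your closing caveat is the right thing to flag, but it resolves in your favor: the paper never formally defines ``prime root system,'' and from its usage (Definition 4, and the set $\Pi$ constructed in Section 2) it demands only a linearly independent family whose first $n$ members are non-isotropic and realize the entries of $M$; your $n+K$ vectors form a basis of $V$, so this holds, and the isotropy of the $u_a$ is harmless since the paper only ever applies reflections to non-isotropic elements.
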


For any non-isotropic element $v$(i.e., $(v,v)\not=0$), there is a reflection
\bey \rho_{v}:&&\lambda\mapsto \lambda-\frac{2(\lambda,v)}{(v,v)}v.\eey

\begin{defi}
Suppose that $M=(m_{ij})_{n\times n}$ is a GIM and $\Pi=\{v_1,\cdots,v_n\}\subset V$ is a  prime root system such that $\frac{2(v_i,v_j)}{(v_i,v_i)}=m_{ij}$, where $V$ has a non-degenerate quadratic form $(\cdot, \cdot)$ and $\dim V=2n-rank(M)$. Let $\Pi'\subset V$.
We say that $\Pi$ and $\Pi'$ are braid-equivalent (denoted by $\Pi\sim\Pi'$) if there exists a sequence of transformations
of the form:
\beqs \Pi\mapsto\cdots \Pi_k\mapsto \Pi_{k+1}\cdots\Pi',\eeqs
where \beqs \Pi_k\mapsto \Pi_{k+1}=(\Pi_k\setminus\{\beta\})\cup\{\rho_\alpha(\beta)\}\eeqs
for some $\alpha, \beta\in\Pi_k$. Particularly, $\Pi'$ is called a braid-equivalent basis of $M$.
\end{defi}

\begin{defi}
Let $\Pi, \Pi'$ are prime root systems of GIMs $M$ and $N$, respectively. We say that $M$ and $N$ are braid-equivalent if $\Pi$ and $\Pi'$ is braid-equivalent.
\end{defi}

The referee reminded us to notice Peng-Xu's previous work. The above definitions are analogues of which appeared in \cite{PX}(also see \cite{XP})  and we also adopt their terminology {\bf braid-equivalent}. In fact, this equivalence relation corresponds to  the reflections of single lines(not of the whole space).
This is why we adopt the name \textsf{partial reflection algebra}.

The \textsf{partial reflection algebra} is dependent on the braid-equivalent basis. However, this Lie algebra is different from that defined  by Peng-Xu, the difference will be showed partially by Example 3) in below. If the GIM is semi-positive definite, an interesting thing is that both classes of Lie algebras defined by Peng-Xu and us have the same classification (see Theorem 11 below).

\noindent
\\[4mm]

\noindent{\bbb 2\quad Lie algebra $\lm$}\\[0.1cm]

Suppose that $M$ is a symmetrizable intersection matrix:
\beqs M=\left(m_{ij}\right)_{i,j\leq n},\eeqs
and ${\rm corank}M=n-{\rm rank}M=K$.

Define lattices  $P^\vee$ as:
\bey P^\vee&=&\bigoplus_{i=1}^n\ZZ h_i\oplus\bigoplus_{j=1}^{K}\ZZ d_j,\\
P&=&\{\lambda\in H^*\mid \lambda(P^\vee)\subseteq\ZZ\},\eey
where $H^*$ is the dual space of
\bey H&=&\mc\otimes_\ZZ P^\vee,\eey
and  $H^*$  has a subset
\bey \Pi&=&\{\alpha_i\in H^*\mid i=1,\cdots,n; \alpha_i(h_j)=m_{ji}\}\eey
which is linearly independent.

There exists a bilinear form over $H^*$, such that $2(\alpha_i,\alpha_j)/(\alpha_j,\alpha_j)=m_{ij}.$

\begin{defi}
A boundary reflection Lie algebra $\lm$ associated with $(M,P^\vee, P,\Pi)$ is the Lie algebra over complex number field $\mc$ generated by $e_i,f_i(i=1,\cdots,n), h\in H$ with defining relations:
\bey
[e_i,f_i]=h_i,&[h,e_i]=\alpha_i(h)e_i,&[h,f_i]=-\alpha_i(h)f_i,
\eey
associated to the graded decomposition
$$\lm=\sum_{\alpha}\lm_\alpha,\quad \lm_\alpha=\{v\mid [h,v]=\alpha(h)v,\forall h\in H\},$$
for all ${\mathcal B}\sim\Pi$ and all $\alpha,\beta\in {\mathcal B}$, $x\in \lm_\alpha, y\in \lm_\beta, z\in \lm_{-\alpha},$
\bey
(\ad x)^{1-\frac{2(\alpha,\beta)}{(\alpha,\alpha)}}(y)=0, &[z,y]=0,&(\alpha,\beta)<0,\\
(\ad z)^{1+\frac{2(\alpha,\beta)}{(\alpha,\alpha)}}(y)=0, &[x,y]=0,&(\alpha,\beta)>0,\\
{[x,y]}=0, &[z,y]=0,&(\alpha,\beta)=0.\eey\end{defi}

\begin{lemma}
If $M$ is a generalized Cartan matrix(GCM) of finite type or of affine type, then $\lm$ is a generalized Kac-Moody Lie algebra.
\end{lemma}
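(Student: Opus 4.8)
The plan is to identify $\lm$ with the Kac--Moody algebra $\mathfrak{g}(M)$ attached to the GCM $M$; for $M$ of finite or affine type this algebra has only real simple roots and so is in particular a generalized Kac--Moody (Borcherds) algebra. Since $M$ is a GCM we have $m_{ij}\le 0$ for all $i\neq j$, hence $(\alpha_i,\alpha_j)\le 0$, and the relations of $\lm$ attached to the standard basis ${\mathcal B}=\Pi$ fall only into the cases $(\alpha,\beta)<0$ and $(\alpha,\beta)=0$. Reading them off with $x=e_i$, $y=e_j$, $z=f_i$ yields exactly the Chevalley--Serre relations $(\ad e_i)^{1-m_{ij}}e_j=0$, $(\ad f_i)^{1-m_{ij}}f_j=0$, together with $[f_i,e_j]=0=[e_i,f_j]$ for $i\neq j$ and $[e_i,e_j]=0=[f_i,f_j]$ when $m_{ij}=0$. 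Thus there is a surjection $\psi\colon \mathfrak{g}(M)\twoheadrightarrow\lm$ carrying generators to generators, and it remains to prove $\psi$ injective, i.e. that every relation of $\lm$ coming from a basis ${\mathcal B}\sim\Pi$ with ${\mathcal B}\neq\Pi$ already holds in $\mathfrak{g}(M)$.

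First I would note that every element of a braid-equivalent basis ${\mathcal B}\sim\Pi$ is a real root of $\mathfrak{g}(M)$: each partial reflection $\rho_\gamma$ with $\gamma$ a real root lies in the Weyl group and preserves $\Delta$ and its set of real roots, while $\Pi$ consists of real simple roots. Hence for $\gamma\in{\mathcal B}$ the space $\mathfrak{g}(M)_\gamma$ is one-dimensional, spanned by a root vector $x_\gamma$, and the triple $(x_\gamma,\gamma^\vee,x_{-\gamma})$ acts locally nilpotently, so the $\gamma$-string through any $\delta\in{\mathcal B}$, say $\delta-p\gamma,\dots,\delta+q\gamma$ with $p-q=\langle\delta,\gamma^\vee\rangle=2(\gamma,\delta)/(\gamma,\gamma)$, is finite. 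Evaluating the defining relations of $\lm$ on $x_\gamma,x_\delta,x_{-\gamma}$ and using $(\ad x_\gamma)^{q+1}x_\delta=0$, $(\ad x_{-\gamma})^{p+1}x_\delta=0$, a short computation reduces the whole family of relations to one combinatorial assertion about ${\mathcal B}$: for distinct $\gamma,\delta\in{\mathcal B}$ one needs $\gamma+\delta\notin\Delta$ when $(\gamma,\delta)>0$, $\ \delta-\gamma\notin\Delta$ when $(\gamma,\delta)<0$, and $\gamma\pm\delta\notin\Delta$ when $(\gamma,\delta)=0$.

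The hard part, and the main obstacle, is precisely this assertion, for it is \emph{false} for arbitrary pairs of real roots: in $G_2$ the short roots $\alpha+\beta$ and $2\alpha+\beta$ have $(\alpha+\beta,2\alpha+\beta)>0$ yet their sum $3\alpha+2\beta$ is a root. So it must be deduced from the special structure of braid-equivalent bases. The key point is that the braid moves preserve three invariants: ${\mathcal B}$ stays a $\ZZ$-basis of the root lattice $Q$ (each move is the unimodular shear $\delta\mapsto\delta-\langle\delta,\gamma^\vee\rangle\gamma$), its elements stay real roots, and---because each $\rho_\gamma$ is an isometry---the multiset of norms $\{(\gamma,\gamma):\gamma\in{\mathcal B}\}$ remains equal to $\{(\alpha_i,\alpha_i)\}$. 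I would then localize to rank two: the data for a pair $\gamma,\delta\in{\mathcal B}$ lives in $\Delta\cap(\ZZ\gamma\oplus\ZZ\delta)$, a finite or affine rank-two system, and I would verify the assertion by going through the list $A_1\times A_1,\ A_2,\ B_2,\ G_2$ (and the affine $A_1^{(1)},\ A_2^{(2)}$), checking in each that the norm-preservation and $\ZZ$-basis constraints exclude exactly the bad pairs (e.g. the short--short pair above is ruled out because the two basis norms must match the two simple-root norms). I expect the cleanest route to be an induction on the number of reflections, showing that these invariants propagate and that the pair conditions are maintained at each step.

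Putting this together, the extra relations of $\lm$ vanish identically in $\mathfrak{g}(M)$, so $\psi$ is an isomorphism and $\lm\cong\mathfrak{g}(M)$; since a GCM of finite or affine type has only real simple roots, $\mathfrak{g}(M)$ is a generalized Kac--Moody algebra, giving the claim. I would finally check the bookkeeping that is easy to botch: that the Cartan $H$ of $\lm$ (of dimension $n+K$, with $K=0$ in the finite and $K=1$ in the affine case) matches the realization used for $\mathfrak{g}(M)$, that $[H,H]=0$, and that the relations $[e_i,f_j]=0$ for $i\neq j$ are genuinely present, so that $\psi$ is well defined and the two Cartan subalgebras are identified.
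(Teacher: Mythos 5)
Your reduction is sound and is, implicitly, the same frame the paper uses: taking ${\mathcal B}=\Pi$ recovers the Serre relations, so there is a surjection $\psi\colon\mathfrak{g}(M)\twoheadrightarrow\lm$; every element of a braid-equivalent basis is a real root; and, by $sl_2$-theory of root strings through real roots, injectivity of $\psi$ amounts to exactly the pair condition you isolate ($\gamma+\delta\notin\Delta$ if $(\gamma,\delta)>0$, $\delta-\gamma\notin\Delta$ if $(\gamma,\delta)<0$, $\gamma\pm\delta\notin\Delta$ if $(\gamma,\delta)=0$, for distinct $\gamma,\delta\in{\mathcal B}$). You are also right that this pair condition is the whole difficulty and that it fails for general pairs of real roots. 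The gap is in your proposed proof of the pair condition. Your three invariants (reality, $\ZZ$-basis of the root lattice $Q$, norm multiset) are indeed preserved by braid moves, but they are \emph{global} properties of ${\mathcal B}$, and rank-two localization discards precisely the global information that is needed. Concretely, let $M$ be of type $C_3$ and, in standard coordinates, $\gamma=\epsilon_1-\epsilon_2$, $\delta=\epsilon_1+\epsilon_2$: two orthogonal short roots with $\gamma+\delta=2\epsilon_1$ and $\delta-\gamma=2\epsilon_2$ both (long) roots, i.e.\ a forbidden pair. Nothing inside the rank-two subsystem $\Delta\cap(\ZZ\gamma\oplus\ZZ\delta)\cong C_2$ excludes it: your norm-matching argument applies only when the pair is the \emph{entire} basis (as in your $G_2$ example), whereas here $\{\gamma,\delta\}$ is a sub-multiset of a rank-three basis whose norm multiset $\{2,2,4\}$ happily contains $\{2,2\}$. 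Excluding the pair forces you to reason about the third basis element: by the norm invariant it must be a long root $\lambda$, and one checks that $\lambda=\pm2\epsilon_1,\pm2\epsilon_2$ are linearly dependent on $\gamma,\delta$, while $\lambda=\pm2\epsilon_3$ generates with $\gamma,\delta$ a sublattice of index $2$ in $Q$, so no completion to a basis exists. That computation is global, not rank-two; and your fallback (``induction on the number of reflections'') is not developed --- it is not at all clear that absence of forbidden pairs propagates through a single braid move without exactly this kind of global argument.

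This is where the paper's proof does its real work. Its easy cases (simply-laced, rank two, $B_l$, $B_l^{(1)}$, $G_2^{(1)}$, $D_{l+1}^{(2)}$) are those where the norm multiset alone settles matters, e.g.\ because ${\mathcal B}$ contains only one short root; but for the hard types ($C_l$, $F_4$, $A_{2l}^{(2)}$, $D_4^{(3)}$, and the affine types of its cases (6)--(9), where ${\mathcal B}$ contains several roots of the dangerous length) it argues globally: the norm invariant gives a unique long root $\alpha^*\in{\mathcal B}$; if a forbidden pair $\rho_\alpha(\beta),\gamma$ existed, one could move $\alpha^*$ by reflections in the short roots of ${\mathcal B}$ onto $\pm(\rho_\alpha(\beta)+\gamma)$, manufacturing a braid-equivalent ``basis'' containing two equal roots and contradicting linear independence. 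Your sketch contains no substitute for this step, and it also does not address the affine-specific phenomena of the paper's cases (6)--(9), where the forbidden configurations involve roots differing by isotropic imaginary roots and a finite rank-two table check does not carry the argument. So: right frame, right invariants, but the decisive combinatorial step --- the only genuinely hard part of the lemma --- is missing.
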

\begin{proof}
(1) If $M$ is of simply-laced type, then the result holds by the properties of root vectors of real roots.

(2) If $M$ is of order $2$, then this lemma holds by the definition of Kac-Moody algebra.

(3) If $M$ is of type $B_l$, $B_l^{(1)}$ or $G_2^{(1)}$, then any ${\mathcal B}$ contains one short root, the relations (2.6)-(2.8) hold by the properties of root vectors of real roots.

(4) Assume that $M$ is of type $C_l$. If $(\rho_\alpha(\beta),\gamma)=0$ and $\rho_\alpha(\beta)\pm\gamma$ are roots, then $\rho_\alpha(\beta)\pm\gamma$ are long roots.

There is a unique long root $\alpha^*\in{\mathcal B}$, then ${\mathcal B}\sim{\mathcal B}_{\omega, \alpha^*}$ for each $\omega\in W$, where $W$ is the Weyl group and
\beqs {\mathcal B}_{\omega, \alpha^*}=({\mathcal B}\setminus\{\alpha^*\})\cup\{\omega(\alpha^*)\}.\eeqs
Let $W_0:=W_{S,{\mathcal B}}$ be the subgroup generated by $\{\rho_\mu| \mu\in{\mathcal B}, (\mu,\mu)<(\alpha^*,\alpha^*)\}$.
If ${\mathcal B}\cap\{\pm(\rho_\alpha(\beta)\pm\gamma)\}$ is empty,  then $\rho_\alpha(\beta)+\gamma-\alpha^*$ (or $\rho_\alpha(\beta)+\gamma+\alpha^*$) is two times of a combination of short roots in ${\mathcal B}$. Then there exists $\omega\in W_0$  such that $\lambda:=\omega(\alpha^*)=\rho(\beta)+\gamma$ (or $\lambda:=\omega(\alpha^*)=-(\rho_\alpha(\beta)+\gamma)$) and hence \beqs \{\lambda, \rho_\lambda(\gamma), \rho_\alpha(\beta)\}\subset ({\mathcal B}_{\omega, \alpha^*})_{\rho_\lambda, \gamma}\sim {\mathcal B}_{\omega, \alpha^*}\sim{\mathcal B}.\eeqs
Then  $({\mathcal B}_{\omega, \alpha^*})_{\rho_\lambda, \gamma}$ contains two same roots, and we obtain a contradiction.
For the case $\lambda:=\omega(\alpha^*)=-(\rho_\alpha(\beta)+\gamma)$, we have
\beqs \{\lambda, \rho_\lambda(\gamma), -\rho_\alpha(\beta)\}\subset (({\mathcal B}_{\omega, \alpha^*})_{\rho_\lambda, \gamma})_{\lambda(\gamma), -\rho_\alpha(\beta)}\sim {\mathcal B}_{\omega, \alpha^*}\sim{\mathcal B},\eeqs
and the same contradiction is obtained.

(5) If $M$ is of type $F_4$, let $\alpha_1,\alpha_2$ be the long roots and $\beta_1, \beta_2$ be the short roots in ${\mathcal B}$. Then for each long root $\alpha^*$, there exists
\beqs  \alpha\in\{\pm\alpha_1, \pm\alpha_2, \pm(\alpha_1+\alpha_2)=\pm\rho_{\alpha_1}(\alpha_2)\}\eeqs
such that $\alpha^*$ belongs to the $W_0$-orbit of $\alpha$, where $W_0=\langle \rho_{\beta_1}, \rho_{\beta_2}\rangle$. The method of (4) works for this case.

(6) Assume $M$ is of type $C^{(1)}$, $F_4^{(1)}$ or $E_6^{(2)}, A_{2l-1}^{(2)}$. If $\alpha_1, \alpha_2$ are two long roots and $\alpha_1-\alpha_2$ is isotropic, then $\alpha_1-\alpha_2$ is an even multiples of the principal imaginary root. Similar to (4) and (5), we also obtain the result.

(7) If $M$ is of type $A_{2l}^{(2)}$, then each ${\mathcal B}$ contains a unique longest root and a unique shortest root. So the proof is similar to (4).

(8) If $M$ is of type  $D_{l+1}^{(2)}$, then  each ${\mathcal B}$ contains two short roots $\beta, \gamma$. If $(\beta, \gamma)=0$, then $\beta\pm\gamma$ are not roots. If $(\beta, \gamma)\not=0$, then $\langle\beta,\gamma\rangle=\pm2$ and $\beta\mp\gamma$ is an imaginary root. Hence the relations (2.6)-(2.8) hold.

(9) If $M$ is of type $D_4^{(3)}$, we only need to get rid of the case that short roots $\beta, \gamma\in{\mathcal B}$ such that $(\beta, \gamma)>0$ (respectively, $(\beta, \gamma)<0$) and $\beta+\gamma$ (respectively, $\beta-\gamma$) is still a root. The method is also similar to (4) and (6).
\end{proof}

\begin{lemma}
If $n=2$, then $\lm$ is a generalized Kac-Moody algebra.
\end{lemma}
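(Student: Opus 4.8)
The plan is to reduce everything to ordinary Kac--Moody theory, using that a $2\times2$ symmetrizable \emph{gim} is braid-equivalent to a generalized Cartan matrix. By the definition of a \emph{gim} the off-diagonal entries obey $m_{12}<0\Leftrightarrow m_{21}<0$ and $m_{12}>0\Leftrightarrow m_{21}>0$, so only three cases occur. If $m_{12}=m_{21}=0$, then $(\alpha_1,\alpha_2)=0$ and the relations of the last line in the definition of $\lm$ give $[e_1,e_2]=[f_1,e_2]=0$ and symmetrically, whence $\lm\cong\mathfrak{sl}_2\oplus\mathfrak{sl}_2$ with Cartan $H$, i.e.\ the Kac--Moody algebra of $\mathrm{diag}(2,2)$. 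If $m_{12},m_{21}<0$, then $M$ is already a symmetrizable GCM and the argument below applies verbatim with $\widetilde M=M$ and $\mathcal B_0=\Pi$. Thus the real content is the case $m_{12}=p>0$, $m_{21}=q>0$.

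Here I first record the key computation. Setting $\beta:=\rho_{\alpha_1}(\alpha_2)=\alpha_2-q\,\alpha_1$, the form is preserved, $(\beta,\beta)=(\alpha_2,\alpha_2)$, while $2(\alpha_1,\beta)/(\beta,\beta)=-p$ and $2(\beta,\alpha_1)/(\alpha_1,\alpha_1)=-q$; hence $\{\alpha_1,\beta\}$ is the prime root system of the \emph{generalized Cartan matrix} $\widetilde M=\left(\begin{smallmatrix}2&-p\\-q&2\end{smallmatrix}\right)$. Because $[e_1,e_2]=0$ and $(\ad f_1)^{\,q+1}(e_2)=0$ are among the relations for $\mathcal B=\Pi$, the vector $e_2$ is a highest-weight vector of weight $q=\langle\alpha_2,\alpha_1^\vee\rangle$ for the triple $(e_1,f_1,h_1)$; so $\tilde e_2:=(\ad f_1)^{q}(e_2)$ and, dually, $\tilde f_2:=(\ad e_1)^{q}(f_2)$ are nonzero vectors in $\lm_\beta$, $\lm_{-\beta}$, and $\tilde h_2:=[\tilde e_2,\tilde f_2]$ lies in $H$ and realizes $\beta^\vee$.

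I would then prove $\lm\cong\mathfrak g(\widetilde M)$, the Kac--Moody algebra of $\widetilde M$ (a fortiori a generalized Kac--Moody algebra), by constructing mutually inverse homomorphisms. Let $E_1,F_1,E_2,F_2,\mathfrak h$ be the Chevalley data of $\mathfrak g(\widetilde M)$. The map $\psi\colon\mathfrak g(\widetilde M)\to\lm$ sending $E_1,F_1,E_2,F_2\mapsto e_1,f_1,\tilde e_2,\tilde f_2$ and $\mathfrak h\mapsto H$ is well defined because the relations (2.6)--(2.8) for $\mathcal B_0=\{\alpha_1,\beta\}$ are exactly the Serre presentation of $\widetilde M$, and it is surjective since $e_2$ is recovered, up to a nonzero scalar, as $(\ad e_1)^{q}(\tilde e_2)$. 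The candidate inverse $\chi\colon\lm\to\mathfrak g(\widetilde M)$ sends $e_1,f_1\mapsto E_1,F_1$, $e_2\mapsto(\ad E_1)^{q}E_2$, $f_2\mapsto(\ad F_1)^{q}F_2$, $H\mapsto\mathfrak h$. Once $\chi$ is shown to be well defined, $\psi\chi=\mathrm{id}$ and $\chi\psi=\mathrm{id}$ follow by checking on generators, and the two Cartans are identified by the dimension count $\dim H=2+K=2n-\mathrm{rank}(M)=\dim\mathfrak h$.

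The step I expect to be the main obstacle is proving that $\chi$ is well defined, i.e.\ that $\mathfrak g(\widetilde M)$ \emph{satisfies} the partial-reflection relations (2.6)--(2.8) for \emph{every} braid-equivalent basis, not just $\mathcal B_0$. Each braid-equivalent basis is a pair of real roots, and I would show that the whole family splits into two orbits under the (infinite dihedral) Weyl group $W$ of $\widetilde M$: the GCM-type pairs, all of the form $\tau_w(\mathcal B_0)$, and the gim-type pairs, all of the form $\tau_w(\Pi)$, where $\tau_w$ is the automorphism of $\mathfrak g(\widetilde M)$ lifting $w\in W$ through the real-root reflections. Since such $\tau_w$ permute the root spaces and carry relations to relations, it suffices to verify (2.6)--(2.8) for the two representatives $\mathcal B_0$ and $\Pi$. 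For $\mathcal B_0$ this is the Serre/Gabber--Kac presentation again; for $\Pi$ it reduces to the single non-root assertion $[e_1,e_2]=0$, i.e.\ $\alpha_1+\alpha_2=(q+1)\alpha_1+\beta\notin\Delta$, together with the nilpotency already built into $\tilde e_2=(\ad f_1)^{q}e_2$. Since $(\alpha_1,\alpha_2)>0$ here, one has $(\alpha_1+\alpha_2,\alpha_1+\alpha_2)=(\alpha_1,\alpha_1)+2(\alpha_1,\alpha_2)+(\alpha_2,\alpha_2)$, which strictly exceeds both root lengths $(\alpha_1,\alpha_1)$ and $(\alpha_2,\alpha_2)$; as real roots of $\mathfrak g(\widetilde M)$ carry one of these two norms and positive-norm vectors are never imaginary, $\alpha_1+\alpha_2$ is not a root, as required. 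The remaining verifications are the same real-root $\mathfrak{sl}_2$-string computations used in the proof of the previous lemma, now uniform because the rank-$2$ real-root system is governed by the dihedral group.
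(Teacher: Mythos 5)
Your strategy is in essence the paper's own: replace the gim by the reflected GCM $\widetilde M$ (your $\tilde e_2=(\ad f_1)^{q}(e_2)$ is exactly the paper's Lusztig-symmetry substitution $e_{\alpha_2}\mapsto\frac{1}{(-m_{1,2})!}(\ad e_{\alpha_1})^{-m_{1,2}}e_{\alpha_2}$), and then transport the relations (2.6)--(2.8) to all braid-equivalent bases via automorphisms attached to reflections. The paper does the latter by induction on single braid moves (its Case 1: sign flips; Case 2: one reflection), while you propose an orbit classification followed by a check of two representatives. Two minor remarks: your asserted nonvanishing of $\tilde e_2$ is premature (at that stage one does not yet know $e_2\neq0$ in $\lm$), but harmless, since surjectivity of $\psi$ only needs the formal $\mathfrak{sl}_2$-identity $(\ad e_1)^{q}(\ad f_1)^{q}e_2=c\,e_2$ with $c\neq0$, valid in any quotient; also, the paper leans on its Lemma 7 to dispose of $\det M\geq0$, whereas your norm argument treats all determinants uniformly, which is a genuine simplification.

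The genuine gap is the orbit classification: as stated it is false, and the failure occurs exactly where the paper's Case 1 is needed. Braid moves include $\gamma\mapsto\rho_\gamma(\gamma)=-\gamma$, so for instance $\{\alpha_1,-\beta\}$ is braid-equivalent to $\mathcal{B}_0=\{\alpha_1,\beta\}$. It is not of the form $\tau_w(\mathcal{B}_0)$, because $w$ preserves inner products while $(\alpha_1,-\beta)>0>(\alpha_1,\beta)$; and it is in general not of the form $\tau_w(\Pi)$ either: when the two root lengths differ (say $m_{12}=1$, $m_{21}=5$, a symmetrizable hyperbolic gim), matching lengths would force $w\alpha_1=\alpha_1$ and $w\alpha_2=-\beta$, but no nontrivial element of the infinite dihedral group fixes $\alpha_1$ (in that example every vector orthogonal to $\alpha_1$ has negative norm, so no real-root reflection fixes $\alpha_1$, and no nontrivial rotation fixes an anisotropic vector), while $\alpha_2\neq-\beta$. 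Hence your verification never reaches such bases. The missing observation is that the relation family (2.6)--(2.8) attached to $\{-\alpha,\beta\}$ coincides with the one attached to $\{\alpha,\beta\}$ up to the Chevalley involution: negating $\alpha$ swaps the roles of $x\in\lm_\alpha$ and $z\in\lm_{-\alpha}$ and interchanges the clauses $(\alpha,\beta)>0$ and $(\alpha,\beta)<0$. This is precisely Case 1 of the paper's proof. Once it is recorded, you only need the classification modulo signs of individual elements, where it does hold --- but proving that classification is itself an induction on braid moves, at which point one may as well run the induction directly on the statement ``this basis satisfies (2.6)--(2.8)'' (sign flips by the invariance just described, single reflections by your $\tau_w$ argument), which is exactly how the paper argues and makes any orbit classification unnecessary.
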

\begin{proof}
First we may assume that $M$ is a GCM, then $m_{1,2}<0, m_{2,1}<0$. Let $\cl$ be the Kac-Moody algebra with structure matrix $M$,
$\Pi=\{\alpha_1,\alpha_2\}$ be  its prime root system. By Lemma 7, we only need consider $\det M<0$ and we may assume  that $\cl$ is generated by $e_{\alpha_i}, f_{\alpha_i}, h_i (i=1,2)$.

{\noindent\bf Case 1.} If $\Pi'=\{\alpha, \beta\}$ satisfies (2.6)-(2.8), then $\Pi''=\{-\alpha, \beta\}$ satisfies (2.6)-(2.8).

{\noindent\bf Case 2.} If $\Pi'=\{\alpha_1, \rho_{\alpha_1}(\alpha_2)\}$, by case 1, we may assume that $\Pi'=\{-\alpha_1, \rho_{\alpha_1}(\alpha_2)\}$, so $\Pi'$ is still of Kac-Moody type.

Define a map $\varphi$:
\beqs e_{\alpha_2}\mapsto \frac{1}{(-m_{1,2})!}(\ad e_{\alpha_1})^{-m_{1,2}}e_{\alpha_2},&&f_{\alpha_2}\mapsto \frac{1}{(-m_{1,2})!}(\ad (-f_{\alpha_1}))^{-m_{1,2}}f_{\alpha_2},\\
e_{\alpha_1}\mapsto -f_{\alpha_1},&&f_{\alpha_i}\mapsto -e_{\alpha_1},\eeqs
then $\varphi$ determines an isomorphism of $\cl$. Note that a quantum analogue of this isomorphism is the famous Lusztig symmetry(see \cite{L}).
So the Serre relations are preserved. By the definition of braid-equivalent basis, the above two cases are sufficient to show that $\lm=\cl$.  Then $\lm$ is a generalized Kac-Moody algebra.
\end{proof}

{\noindent\bf Examples.}

1) If $$M=\left[\begin{array}{ccc}2&-1&1\\-1&2&-1\\1&-1&2\end{array}\right]$$
and $\Pi=\{\alpha_1,\alpha_2,\alpha_3\}$. In \cite{GX}, it is proved that $\emph{\textbf{gim}}(M)$ has an ideal such that the quotient is isomorphic to $sp_{6}$. In particular, the image of $[e_{\alpha_1},[e_{\alpha_2},e_{\alpha_3}]]$ in the quotient is not zero, then in $\emph{\textbf{gim}}(M)$,
$[e_{\alpha_1},[e_{\alpha_2},e_{\alpha_3}]]\not=0$.  However,
$[e_{\alpha_1},[e_{\alpha_2},e_{\alpha_3}]]=0$ in $\lm$, hence $\lm$ is different from generalized intersection matrix algebra.

Particularly,
$$\Pi\sim\{\alpha_1,\alpha_2,\alpha_3-\alpha_1\},$$
the associated intersection matrix is
$$N=\left(\begin{array}{ccc}2&-1&-1\\-1&2&0\\-1&0&2\end{array}\right),$$
so $\lm\cong\ln$ is a finite dimensional simple Lie algebra of type $A_3$.

2) If $$M=\left(\begin{array}{ccc}2&-1&2\\-1&2&-1\\2&-1&2\end{array}\right)$$
and $\Pi=\{\alpha_1,\alpha_2,\alpha_3\}$, then
$$\Pi\sim\{\alpha_1,\alpha_2,\alpha_3+\alpha_2\}\sim\{\alpha_1,\alpha_2,-\alpha_3-\alpha_2\},$$
the associated intersection matrix is
$$N=\left(\begin{array}{ccc}2&-1&-1\\-1&2&-1\\-1&-1&2\end{array}\right),$$
so $\lm\cong\ln$ is an affine Lie algebra of type $A_2^{(1)}$.

3) If $$M=\left(\begin{array}{cccc}2&2&2&2\\2&2&2&2\\2&2&2&2\\2&2&2&2\end{array}\right)$$
and $\Pi=\{\alpha_1,\alpha_2,\alpha_3,\alpha_4\}$, then $$[e_{\alpha_1},[e_{\alpha_2},[e_{\alpha_3},f_{\alpha_4},]]]\not=0$$ in $\lm$, but
$$[e_{\alpha_1},[e_{\alpha_2},[e_{\alpha_3},f_{\alpha_4},]]]=0$$ in $\emph{\textbf{im}}(M)$, hence $\lm$ is also different from the intersection matrix algebra.

For the relation $[e_{\alpha_1},[e_{\alpha_2},[e_{\alpha_3},f_{\alpha_4},]]]\not=0$ in $\lm$, we shall give a detailed proof in Appendix.

Example 3) says that the length of roots are not limited by the root length of root vector generators, this is very different from Peng-Xu's definition.
As far as the authors know, this phenomenon inherited from ${\bf\emph{gim}}$ disappears in other known quotients. This is another reason driving us to study $\lm$.


\begin{theo}{\label{theo1}}
If $M$ and $N$ are braid-equivalent, then
$$\lm\cong\ln.$$
\end{theo}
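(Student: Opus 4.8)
The plan is to reduce to a single braid move and then transport the distinguished generating root vectors across that move by a Lusztig-type symmetry, exploiting the fact that the Serre-type relations (2.6)--(2.8) are, by construction, imposed over \emph{every} basis braid-equivalent to $\Pi$, not just over the distinguished one.

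First I would observe that braid-equivalence is an equivalence relation and that isomorphism of Lie algebras is transitive, so it suffices to treat the case where $N$ arises from $M$ by one elementary move, i.e. $\Pi_N=(\Pi_M\setminus\{\beta\})\cup\{\gamma\}$ with $\gamma=\rho_\alpha(\beta)$ for some $\alpha,\beta\in\Pi_M$. Since reflections preserve the form $(-,-)$ and the span of $\Pi$, the matrices $M$ and $N$ share the corank $K$, hence $\lm$ and $\ln$ have the \emph{same} Cartan $H=\mc\otimes_\ZZ P^\vee$ and the same weight grading. Crucially, because $\Pi_M\sim\Pi_N$, the two defining families of relations range over the \emph{identical} set $\{{\mathcal B}:{\mathcal B}\sim\Pi_M\}=\{{\mathcal B}:{\mathcal B}\sim\Pi_N\}$; the only honest difference between the two presentations is which braid-equivalent basis supplies the generating root vectors.

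Next I would build candidate generators of $\ln$ inside $\lm$. For the unchanged roots $\delta\in\Pi_M\cap\Pi_N$ I keep $e_\delta,f_\delta,h_\delta$; for $\gamma=\rho_\alpha(\beta)$ I set, in analogy with the map $\varphi$ of Lemma 8,
\beqs \tilde e_\gamma=\frac{1}{k!}(\ad e_\alpha)^{k}e_\beta,\qquad \tilde f_\gamma=\frac{1}{k!}(\ad f_\alpha)^{k}f_\beta,\qquad k=-\frac{2(\alpha,\beta)}{(\alpha,\alpha)},\eeqs
when $(\alpha,\beta)<0$, and the $e$--$f$ mixed expressions (with $f_\alpha$ in place of $e_\alpha$) when $(\alpha,\beta)>0$. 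The relation $(\ad e_\alpha)^{k+1}e_\beta=0$ from (2.6) for the pair $\{\alpha,\beta\}\subset\Pi_M$ shows $\tilde e_\gamma$ is the top of the $\alpha$-string through $\beta$, and the grading gives $\tilde e_\gamma\in\lm_\gamma$, $\tilde f_\gamma\in\lm_{-\gamma}$ automatically. I then verify that these images satisfy the defining relations of $\ln$: the bracket relations with $H$ are immediate from the grading, while the Serre-type relations (2.6)--(2.8) for the basis $\Pi_N$ hold \emph{in $\lm$ for free}, precisely because $\Pi_N\sim\Pi_M$ and those relations are part of the definition of $\lm$. This yields a homomorphism $\phi:\ln\to\lm$, and the symmetric construction (using $\rho_\alpha(\gamma)=\beta$) gives $\psi:\lm\to\ln$. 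Checking $\phi\psi=\mathrm{id}$ and $\psi\phi=\mathrm{id}$ on generators then reduces to the reversibility $\rho_\alpha^2=\mathrm{id}$ on roots together with the normalization of the Lusztig vectors.

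The main obstacle is the one genuinely computational input: showing that $\tilde e_\gamma\neq0$ and that it is a \emph{bona fide} $\mathfrak{sl}_2$-generator, namely $[\tilde e_\gamma,\tilde f_\gamma]=h_\gamma$ (up to the expected nonzero scalar), so that $\phi$ respects $[e_\gamma,f_\gamma]=h_\gamma$. I would isolate the subalgebra of $\lm$ generated by $e_\alpha,f_\alpha,e_\beta,f_\beta$; the relations of $\lm$ restrict on these four generators to exactly the rank-$2$ partial reflection relations, so by Lemma 8 this subalgebra is the rank-$2$ (generalized) Kac--Moody algebra, inside which $\tilde e_\gamma,\tilde f_\gamma$ are the images of $e_\beta,f_\beta$ under the Lusztig symmetry and the Cartan relation is classical. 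The case $(\alpha,\beta)>0$ must be handled separately but in the same spirit, via the $e$--$f$ mixing automorphism $\varphi$ of Lemma 8; this is where the positive off-diagonal entries peculiar to \textbf{gim}'s make the verification nonroutine, and it is the step on which I would spend the most care.
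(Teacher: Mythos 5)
Your proposal is correct and follows essentially the same route as the paper: reduce to a single elementary braid move (the paper's Case 1 is your self-reflection case $\gamma=-\beta$, its Case 2 your genuine reflection case $\gamma=\rho_\alpha(\beta)$) and transport the generators across that move using the Lusztig-type symmetry of Lemma 8 applied to the rank-2 subalgebra on the two roots involved. If anything, your write-up is more complete: the paper's Case 2 merely asserts that the rank-2 isomorphism of Lemma 8 ``suffices,'' whereas you supply the missing justification --- namely that braid-equivalent bases determine identical families of defining relations, so the transported generators automatically satisfy the relations of $\ln$ inside $\lm$.
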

\begin{proof}
Without less of generality,  we can suppose that $\lm$ and $\ln$ have the same subspace $H$ and $H^*$ and
$$\Pi_M=\{\alpha_1,\alpha_2\cdots,\alpha_n\},\quad \Pi_N=\{\beta_1,\beta_2,\cdots,\beta_n\},$$
also suppose that

\qquad (1) $\lm$ is generated by $e_{\alpha_i}, f_{\alpha_i}, h_i, d_j$;

\qquad (2) $\ln$ is generated by $x_{\beta_i}, y_{\beta_i}, t_i, s_j$.

{\noindent\bf Case 1.} If $\beta_1=-\alpha_1,\beta_2=\alpha_2,\cdots,\beta_n=\alpha_n$.

It is obvious that the homomorphism $\varphi$ defined via:
\beqs e_{\alpha_1}\mapsto y_{\beta_1},&f_{\alpha_1}\mapsto x_{\beta_1},\\
e_{\alpha_i}\mapsto x_{\beta_i},&f_{\alpha_i}\mapsto x_{\beta_i},&i\not=1,\eeqs
and $d_j\mapsto s_j$ is an isomorphism of Lie algebras.

{\noindent\bf Case 2.} If $\beta_1=\rho_{\alpha_2}(\alpha_1),\beta_2=\alpha_2,\cdots,\beta_n=\alpha_n$. Let $L_M$ be generated by $e_{\alpha_i}, f_{\alpha_i}, h_i (i=1,2)$ and $L_N$ be generated by $x_{\beta_i}, y_{\beta_i}, t_i (i=1,2)$. It suffices to show the Lie algebra isomorphism $L_M\cong L_N$. However, this is a direct consequence of  Lemma 8. Then $\lm\cong\ln$. \end{proof}

Out of question, $\lm$ is a generalized Kac-Moody Lie algebra for $n\leq2$. In the next sections, we always assume that $M$ is indecomposable and $n\geq3$.

\noindent\\[4mm]

\noindent{\bbb 3\quad Classification of $\lm$ for positive and semi-positive definite $M$}\\[0.1cm]

\begin{theo}
Suppose that $M$ is indecomposable. If there exists a diagonal matrix $S$ with positive components, such that $SM$ is positive definite, then $\lm$ is a finite dimensional simple Lie algebra.
\end{theo}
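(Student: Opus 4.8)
The plan is to use braid moves to transport $M$ to a generalized Cartan matrix of finite type, and then conclude via Theorem~\ref{theo1} and Lemma~5. Write $(\cdot,\cdot)$ for the form of Lemma~1. Because $SM$ is positive definite, $M$ is nonsingular, so $K=0$, $\dim H^{*}=n$, and $(\cdot,\cdot)$ is positive definite on the span of $\Pi=\{v_{1},\dots,v_{n}\}$.

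First I would check that every configuration reachable from $\Pi$ stays inside one finite root system. The group $W=\langle\rho_{v_{1}},\dots,\rho_{v_{n}}\rangle$ preserves the lattice $Q=\bigoplus_{i}\ZZ v_{i}$, since $\rho_{v_{i}}(v_{k})=v_{k}-m_{ik}v_{i}\in Q$; hence $\Phi:=W\Pi\subseteq Q$. As $W$ acts by isometries, every element of $\Phi$ has squared length in the finite set $\{(v_{i},v_{i})\}$, and a lattice in a positive-definite space has only finitely many vectors of bounded length, so $\Phi$ is finite. Writing $\beta=w(v_{j})$ with $w\in W$ and using that $w$ is an isometry, $\frac{2(\alpha,\beta)}{(\beta,\beta)}=\frac{2(w^{-1}\alpha,v_{j})}{(v_{j},v_{j})}$ with $w^{-1}\alpha\in Q$; expanding $w^{-1}\alpha$ in the $v_{k}$ and using $\frac{2(v_{k},v_{j})}{(v_{j},v_{j})}=m_{jk}\in\ZZ$ shows this is an integer, so $\Phi$ is a finite crystallographic root system. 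Since $\rho_{\alpha}\in W$ whenever $\alpha\in\Phi$, each braid move carries a subset of $\Phi$ to a subset of $\Phi$; starting from $\Pi\subseteq\Phi$, every braid-equivalent basis is an $n$-element linearly independent subset of $\Phi$, and there are only finitely many of them.

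Next I would extract a base. Fix $\rho\in H^{*}$ with $(\rho,w)\neq0$ for all $w\in\Phi$. Given a braid-equivalent basis, normalise signs (the flip $w\mapsto-w=\rho_{w}(w)$ is a braid move) so that $(\rho,w)>0$ for every element, and put $H(\mathcal B)=\sum_{w\in\mathcal B}(\rho,w)>0$. Let $\mathcal B=\{w_{1},\dots,w_{n}\}$ minimise $H$ over this finite family. I claim $(w_{i},w_{j})\le0$ for all $i\neq j$. Otherwise $(w_{i},w_{j})>0$ for some $i\neq j$; reflecting $w_{j}$ by $w_{i}$ replaces $(\rho,w_{j})$ by $|(\rho,w_{j})-m_{ij}(\rho,w_{i})|$, lowering $H$ exactly when $m_{ij}(\rho,w_{i})<2(\rho,w_{j})$, and symmetrically reflecting $w_{i}$ by $w_{j}$ lowers $H$ when $m_{ji}(\rho,w_{j})<2(\rho,w_{i})$. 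If neither held we would get $m_{ij}(\rho,w_{i})\ge2(\rho,w_{j})$ and $m_{ji}(\rho,w_{j})\ge2(\rho,w_{i})$; multiplying gives $m_{ij}m_{ji}\ge4$, contradicting $m_{ij}m_{ji}=4\cos^{2}\theta_{ij}<4$ (linear independence). So some braid move strictly lowers $H$, contradicting minimality; hence $\mathcal B$ is a base.

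Thus $\mathcal B=\{\beta_{1},\dots,\beta_{n}\}$ has matrix $N=\big(\tfrac{2(\beta_{i},\beta_{j})}{(\beta_{i},\beta_{i})}\big)$ with $2$ on the diagonal and non-positive integer off-diagonal entries, i.e.\ a symmetrizable generalized Cartan matrix with positive-definite symmetrization, hence of finite type; moreover $N$ is indecomposable because $\Phi$ is irreducible (a braid-invariant equivalent to indecomposability of $M$). Since $M\sim N$, Theorem~\ref{theo1} gives $\lm\cong\ln$, and Lemma~5 identifies $\ln$ with the Kac--Moody algebra $\mathfrak g(N)$ of the finite-type, connected matrix $N$, namely the finite-dimensional simple Lie algebra of that type. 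Therefore $\lm$ is finite-dimensional and simple. The main obstacle is the reduction step: maintaining the bookkeeping that braid moves preserve both integrality of the Cartan entries and membership in the finite set $\Phi$, so that $H$ is minimised over a genuinely finite family; once this is in place, the bound $m_{ij}m_{ji}\le3$ forces termination and the remainder is a routine appeal to Theorem~\ref{theo1} and Lemma~5.
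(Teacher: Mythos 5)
Your proof is correct, and it shares the paper's overall skeleton --- show that $M$ is braid-equivalent to an indecomposable Cartan matrix $N$ of finite type, then invoke Theorem~\ref{theo1} for $\lm\cong\ln$ and the finite-type lemma (Lemma 7 in the paper's numbering, not Lemma 5) to identify $\ln$ with a finite-dimensional simple Lie algebra --- but the key reduction step is carried out by a genuinely different argument. The paper proceeds greedily by induction: having built $\Pi_p=\{\beta_1,\dots,\beta_p\}$ whose matrix $M_p$ is a finite-type Cartan matrix, it adjoins the next root $\alpha_{p+1}$ and applies an element of the finite Weyl group of ${\textsf{Pra}}(M_p)$ --- a composition of reflections in current basis elements, hence realizable as braid moves acting on the new root alone --- to obtain an anti-dominant representative $\beta_{p+1}$ with $(\beta_i,\beta_{p+1})\le 0$ for all $i\le p$; positive definiteness is what guarantees both the existence of such a representative and that each $M_{p+1}$ is again of finite type. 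You argue globally instead: every braid-equivalent basis is an $n$-element linearly independent subset of the single finite crystallographic root system $\Phi=W\Pi$ (finiteness coming from the lattice $Q$ together with bounded square lengths in a positive-definite space), so the family of such bases is finite, and a minimizer of the height $\sum_{w}(\rho,w)$ must have all pairwise inner products $\le 0$, since otherwise one of the two possible reflections strictly lowers the height by the dichotomy ending in $m_{ij}m_{ji}=4\cos^{2}\theta_{ij}<4$. Your route makes the finiteness and termination completely explicit and self-contained (it is the classical base-extraction argument transplanted to braid moves), and it records the integrality check that the paper leaves implicit; the paper's induction is shorter and more constructive, producing the Cartan matrix one root at a time. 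The one place where you are terser than you should be is the parenthetical claim that irreducibility of $\Phi$ is braid-invariant and forces $N$ indecomposable: this is true, but it deserves the short argument that braid moves preserve the reflection group $W$ (via $\rho_{\rho_\alpha(\beta)}=\rho_\alpha\rho_\beta\rho_\alpha$), hence $\Phi=W\mathcal{B}$ for every braid-equivalent basis $\mathcal{B}$, so an orthogonal splitting of $\mathcal{B}$ would split $\Phi$ and therefore split $\Pi$, contradicting indecomposability of $M$.
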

\begin{proof} Since $SM$ is positive definite, then $K=0$.  Suppose that $\Pi=\{\alpha_1,\alpha_2\cdots,\alpha_n\}$ such that
$$\frac{2(\alpha_i,\alpha_j)}{(\alpha_i,\alpha_i)}=m_{ij},$$
and let $\Pi_1=\{\beta_1=\alpha_1\}$, $M_1=(2)$. Clearly, ${\textsf{Pra}}(M_1)$ is a three dimensional simple Lie algebra. Because $M$ is indecomposable, there is $\alpha_i$, we can assume that $i=2$ for convenience, then $\beta_2=\alpha_2$ or $\beta_2=\rho_{\alpha_1}(\alpha_2)$ such that $(\beta_1, \beta_2)<0$. Let $\Pi_2=\{\beta_1,\beta_2\}$ and
$$M_2=\left[\begin{array}{cc}2&\frac{2(\beta_1,\beta_2)}{(\beta_1,\beta_1)}\\
\frac{2(\beta_1,\beta_2)}{(\beta_2,\beta_2)}&2\end{array}\right],$$
then $M_2$ is Cartan matrix and ${\textsf{Pra}}(M_2)$ is a finite dimensional simple Lie algebra. Induction on $p$, let $\Pi_p=\{\beta_1,\cdots,\beta_p\}$ be such that
$$M_p=\left[\frac{2(\beta_i,\beta_j)}{(\beta_j,\beta_j)}\right]_{1\leq i,j\leq p}$$ is a Cartan matrix and $\alpha_{p+1}$ be a non-zero weight of ${\textsf{Pra}}(M_p)$, so there is $\rho$ from ${\textsf{Pra}}(M_p)$'s Weyl group such that $(\beta_i,\beta_{p+1})\leq 0$ for all $1\leq i\leq p$(at least one of them is non-zero), where $\beta_{p+1}=\rho(\alpha_{p+1})$. Note that the existence of $\rho$ follows from the positive definite property of $M_p$.
Let $\Pi_{p+1}=\{\beta_1,\cdots,\beta_{p+1}\}$ and $$M_{p+1}=\left[\frac{2(\beta_i,\beta_j)}{(\beta_j,\beta_j)}\right]_{1\leq i,j\leq p+1}$$ is still a Cartan matrix. Then we can get a Cartan matrix $M_n$ which is braid-equivalent to $M$, so $\lm\cong {\textsf{Pra}}(M_n)$ is a simple Lie algebra of finite type.
\end{proof}

\begin{theo}
Suppose that $M$ is an indecomposable symmetrizable positive or semi-positive definite generalized intersection matrix, then $M$ must be braid-equivalent to an intersection matrix determined by one of the modified Dynkin diagrams
listed in {\bf Figure 1}.
\end{theo}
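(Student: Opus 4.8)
The plan is to use braid moves to turn the prime root system of $M$ into one whose Gram data is an ordinary generalized Cartan matrix (a GCM), and then to read off the answer from the classical finite/affine classification. The backbone of the argument is that each partial reflection $\rho_\alpha$ is an isometry of the form $(-,-)$ furnished by Lemma 5. Consequently braid equivalence changes only the ordered root basis, not the underlying quadratic space $V$; in particular it preserves both the signature of the form and the corank $K$. If $SM$ is positive definite then $K=0$ and the form is positive definite on $\mathrm{span}(\Pi)$, while if $SM$ is strictly semi-positive definite the form is positive semidefinite with a nontrivial radical. Since $\rho_\beta(\beta)=-\beta$, replacing a single root by its negative is itself a legal braid move, so I may freely flip signs throughout.

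First I would repeat the inductive construction from the previous theorem, now permitting degeneracy. Setting $\Pi_1=\{\beta_1\}$, suppose inductively that $\Pi_p=\{\beta_1,\dots,\beta_p\}$ has been produced with its Gram matrix $M_p=\big[\,2(\beta_i,\beta_j)/(\beta_j,\beta_j)\,\big]$ a GCM, that is, with all off-diagonal entries non-positive. Using indecomposability of $M$ I select an ambient root $\alpha_{p+1}$ not orthogonal to $\mathrm{span}(\Pi_p)$ and act on it by the reflection subgroup $W(\Pi_p)=\langle\rho_{\beta_1},\dots,\rho_{\beta_p}\rangle$, seeking $\beta_{p+1}$ in its orbit with $(\beta_i,\beta_{p+1})\leq 0$ for every $i\leq p$. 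This keeps $M_{p+1}$ a GCM. After $n$ steps I reach a GCM $M_n$ that is braid-equivalent to $M$, whence $\lm\cong{\textsf{Pra}}(M_n)$ by Theorem~\ref{theo1}.

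The main obstacle is the induction step in the semi-positive definite (degenerate) case. When the form is positive definite the finite Weyl group of $M_p$ acts simply transitively on its chambers, so an antidominant representative satisfying $(\beta_i,\beta_{p+1})\leq 0$ always exists; this is precisely the mechanism used for the positive definite theorem. Once a radical appears the reflections fix it pointwise, so they cannot adjust the isotropic component of a root, and the existence of an antidominant representative must be argued through the real (non-isotropic) part alone. Since an indecomposable symmetrizable semi-positive definite matrix has corank exactly one, removing any single node from the final configuration leaves a positive definite system; I would therefore build a positive definite $\Pi_{n-1}$ by the finite argument above and then place the last root, showing that its real part can be made antidominant while its radical component supplies the single affine (imaginary) node. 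Verifying that this placement is always achievable, and that it introduces no positive off-diagonal entries, is the technical heart of the proof.

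It remains to identify the GCMs that occur. By the standard classification of indecomposable symmetrizable generalized Cartan matrices, the positive definite ones are exactly the finite-type Cartan matrices and the corank-one semi-positive definite ones are exactly the affine-type matrices. Attaching to each the decoration that records the original GIM data, in particular which nodes carried the positive off-diagonal entries eliminated by the reductions, yields precisely the modified Dynkin diagrams of Figure 1. Hence every indecomposable symmetrizable positive or semi-positive definite GIM is braid-equivalent to an intersection matrix on that list, which is the assertion.
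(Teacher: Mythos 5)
There is a genuine gap, and it is fatal to the proposed strategy. Your argument hinges on the claim that ``an indecomposable symmetrizable semi-positive definite matrix has corank exactly one,'' which is true for generalized \emph{Cartan} matrices but false for generalized \emph{intersection} matrices: the paper's own Example 3), the $4\times4$ matrix with every entry equal to $2$, is indecomposable, symmetric, semi-positive definite (it is twice the all-ones matrix, with eigenvalues $8,0,0,0$), and has corank $3$. Consequently your plan --- build a positive definite $\Pi_{n-1}$ and then attach a single ``affine node'' so as to land in the affine GCM classification --- cannot even get started when $K\geq 2$. Worse, the target of your reduction is wrong: since braid moves are isometries, the corank is invariant, and an indecomposable semi-positive GIM of corank $\geq 2$ can never be braid-equivalent to a GCM at all (indecomposable semi-positive GCMs have corank $\leq 1$, and a splitting into GCM blocks cannot absorb the excess corank, as every nonzero block has rank $\geq 1$). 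This is already visible in the statement being proved: the modified Dynkin diagrams of Figure 1 with multiplicities $r,s,t\geq 2$ have \emph{positive} off-diagonal entries such as $(\alpha_{1,i},\alpha_{1,k})=2$, so they are not GCMs, and the classification is not ``finite plus affine type'' but ``finite type decorated with multiplicities'' (plus the non-reduced $BC_l$ family).

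The paper's proof handles arbitrary corank differently: it selects a maximal subfamily $\Pi^\sharp=\{\alpha_1,\dots,\alpha_{n-K}\}$ whose Gram matrix is non-degenerate, applies Theorem 10 to replace $\Pi^\sharp$ by a finite-type Cartan basis, and then, for each of the remaining $K$ roots $\alpha_j$, uses the finite Weyl group of $\Pi^\sharp$ to find $w_j$ with $(w_j(\alpha_j),\alpha_i)\leq 0$ for all $i$. Antidominance together with semi-positivity forces each $w_j(\alpha_j)$ to be the negative of a highest (long or short) root \emph{up to an element of the radical}, i.e.\ a copy of an existing node modulo isotropic vectors; the count of such copies is exactly the decoration $r$ (or $s$, $t$), and the case analysis over root lengths and the type of $\Pi^\sharp$ produces Figure 1. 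Your first inductive paragraph is essentially the paper's Theorem 10 and is fine for the positive definite case, but the degenerate case requires the ``copies modulo the radical'' mechanism, not a reduction to affine matrices.
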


\centerline{\begin{tabular}{ll}
$A_l(r):$&\xymatrix{\bigcirc\!\!\!\!{r}\;_1\ar@{-}[r]&\circ_2\ar@{-}[r]&\circ\cdots\cdots\circ_{l-1}\ar@{-}[r]&\circ_l}\\
$B_l(r,s):$&\xymatrix{\bigcirc\!\!\!\!{r}\;_1\ar@{-}[r]&\circ_2\ar@{-}[r]&\circ\cdots\cdots\circ\ar@{-}[r]&\circ_{l-1}\ar@{=>}[r]&\bigcirc\!\!\!\!{s}\;_l}\\
$C_l(r,s):$&\xymatrix{\bigcirc\!\!\!\!{r}\;_1\ar@{=>}[r]&\circ_2\ar@{-}[r]&\circ\cdots\cdots\circ\ar@{-}[r]&\circ_{l-1}\ar@{-}[r]&\bigcirc\!\!\!\!{s}\;_l}\\
$D_l(r):$&\xymatrix{\bigcirc\!\!\!\!{r}\;_1\ar@{-}[r]&\circ_2\ar@{-}[r]&\circ\cdots\cdots\circ\ar@{-}[r]&\;\circ_{l-2}\ar@{-}[r]\ar@{-}[d]&\circ_l\\
&&&\;\circ_{l-1}}\\
$E_{6,7,8}(r):$&\xymatrix{\bigcirc\!\!\!\!{r}\;_1\ar@{-}[r]&\circ_3\ar@{-}[r]&\circ_4\ar@{-}[d]\ar@{-}[r]&\circ_{5}\ar@{-}[r]&\circ_6\cdots\cdots\circ\\
&&\circ_{2}}\\
$F_4(r,s):$&\xymatrix{\bigcirc\!\!\!\!{r}\;_1\ar@{-}[r]&\circ_2\ar@{=>}[r]&\circ_3\ar@{-}[r]&\bigcirc\!\!\!\!{s}\;_4}\\
$G_2(r,s):$&\xymatrix{\bigcirc\!\!\!\!{r}\;_1\equiv\!\equiv\!\equiv\!\equiv\!\rangle\bigcirc\!\!\!\!\!{s}\;_2}\\
$A_1(r,s):$&\xymatrix{\bigcirc\!\!\!\!{r}\;_1\underline{\equiv\!\equiv\!\equiv\!\equiv}\!\rangle\bigcirc\!\!\!\!\!{s}\;_2}\\
$BC_l(r,s,t):$&\xymatrix{\bigcirc\!\!\!\!{r}\;_1\ar@{=>}[r]&\bigcirc\!\!\!\!{s}\;_2\ar@{-}[r]&\circ_3\ar@{-}[r]&\circ\cdots\cdots\circ\ar@{-}[r]&\circ_{l}\ar@{=>}[r]&\bigcirc\!\!\!\!{t}\;_{l+1}}
\end{tabular}}

\centerline{\bf Figure 1}

{\bf Interpretation:}

 The circle, the number of lines between circles and the arrows have the same meaning of Dynkin diagrams. The number $r$(or $s,t$) in the circle means the number of copies of the simple root. If any number in circle is $1$, the the diagram is just the Dynkin diagram ($A_1,B_l,C_l,D_l,E_{6,7,8},F_4,G_2$ and $A_{2l}^{(2)}$). For example, the modified Dynkin diagram $B_l(r,s)$ means:

$\bullet$ $\Pi=\{\alpha_{1,i},\alpha_2,\cdots,\alpha_{l-1},\alpha_{l,j}\mid 1\leq i\leq r, 1\leq j\leq s\}$;

$\bullet$ $(\alpha_{1,i},\alpha_{2})=-1$, $(\alpha_p,\alpha_{p+1})=-1$, $(\alpha_{l-1},\alpha_{l,j})=-2$, $(\alpha_{1,i},\alpha_{1,k})=2$,  $(\alpha_p,\alpha_{p})=2$ and $(\alpha_{l,t},\alpha_{l,j})=1$.
Other pairs of roots are orthogonal. Hence $\alpha_{l,j}$ is a short root.

$\bullet$ the intersection matrix is of $(r+s+l-2)\times(r+s+l-2)$.

\begin{proof}
Let $\Pi=\{\alpha_1,\cdots,\alpha_n\}$,  and $\Pi^\sharp=\{\alpha_1,\cdots,\alpha_{n-K}\}$ be such that
$$M_{n-K}=\left[\frac{2(\alpha_i,\alpha_j)}{(\alpha_j,\alpha_j)}\right]_{1\leq i,j\leq n-K}$$
is indecomposable non-degenerate.

Suppose that $W$ is the Weyl group of ${\textsf{Pra}}(M_{n-K})$. By Theorem 10, we may assume that $M_{n-K}$ is a Cartan matrix.  Restricted on dual space of its Cartan subalgebra, for any $\alpha_{j}(j>n-K)$, there exists $w_j\in W$ such that $(w_j(\alpha_j),\alpha_i)\leq0$ for all $1\leq i\leq n-K$.

1) If $\Pi$ has only one root length,  then $\Pi$ is of type  $A_l(r),D_l(r), E_{6,7,8}(r)$.
We prove it in three cases.

(1.a) $\Pi^\sharp$ is of type $E$. Then every $w_j(\alpha_j)$ has to be the minus  highest root(up to an imaginary root), otherwise it contradicts to that $M$ is semi-positive definite and $rank(M)=n-K$. So we may choose $w_j'$ such that $w_j'(\alpha_j)=\alpha_1$ and $\Pi$ is of type  $E_{6,7,8}(r)$.

(1.b) $\Pi^\sharp$ is of type $D$. If there exists $w_j(\alpha_j)$ such that $\Pi^\sharp\cup\{w_j(\alpha_j)\}$ is of type $E_8^{(1)}$, then we may replace $\Pi^\sharp$ by one of type $E_8$ and $\Pi$ is of type $E_8(r)$. For other cases, every $w_j(\alpha_j)$ has to be the minus  highest root So $\Pi$ is of type  $D_l(r)$.

(1.c) $\Pi^\sharp$ is of type $A$. If there exists $w_j(\alpha_j)$ such that $\Pi^\sharp\cup\{w_j(\alpha_j)\}$ is of type $E_7^{(1)}$ or $E_8^{(1)}$, then we may replace $\Pi^\sharp$ by one of type $E_{7,8}$ and $\Pi$ is of type $E_{7,8}(r)$. For other cases, every $w_j(\alpha_j)$ has to be the minus highest root So $\Pi$ is of type  $D_l(r)$.

2) If $\Pi$ has two different root lengths, but $\Pi^\sharp=\{\alpha_1\}$, then $\Pi$ is of type $A_1(r,s)$.

Up to imaginary roots, $\Pi\subset\{\pm\alpha_1, \pm\frac{1}{2}\alpha_1\}$ (or $\Pi\subset\{\pm\alpha_1, \pm2\alpha_1\}$). Equivalently,
$\Pi=\{\alpha_1, -\frac{1}{2}\alpha_1\}$ (or $\Pi=\{\alpha_1, -2\alpha_1\}$), and $\Pi$ is of type $A_1(r,s)$ which depends the number of $\alpha_1$ and the number of $-\frac12\alpha_1$(or $-2\alpha_1$).

3) If $\Pi^\sharp$ is of type $G_2$, then $\Pi$ is of type $G_2(r,s)$.

Each $w_j(\alpha_j)$ has to be the minus highest long root or the minus highest short root, this implies the result.

4) If $\Pi$ has two different root lengths, but $\Pi^\sharp=\{\alpha_1,\alpha_2\}$ has one root length, then $\Pi$ is of type $G_2(r,s)$.

If $w_j(\alpha_j)$ is shorter then the square length of $w_j(\alpha_j)$ has to be $\frac{1}{3}$ of square length of $\alpha_1$. If $w_j(\alpha_j)$ is longer then the square length of $w_j(\alpha_j)$ has to be $3$ multiple of square length of $\alpha_1$. Replace $\Pi^\sharp$ by $\Pi^{*}=\{w_j(\alpha_j), \alpha_2\}$, which is type $G_2$.

5) If $\Pi^\sharp$ is of type $B$ (or $C$) and $\Pi$ has two different root lengths, then $\Pi$ is of type $B_l(r,s)$ (or $C_l(r,s)$). The proof is similar to 3).

In the following cases, the proof is similar and we only state the result.

6) If $\Pi$ has two different root lengths, but $\Pi^\sharp$ is of type $D_l$, then $\Pi$ is of type $B_l(r,s)$ or $C_l(r,s)$.

7) If $\Pi$ has two different root lengths and $\Pi^\sharp$ is of type $B_4$, then $\Pi$ is of type $B_4(r,s)$ or $F_4(r,s)$.

8) If $\Pi$ has two different root lengths and $\Pi^\sharp$ is of type $C_4$, then $\Pi$ is of type $C_4(r,s)$ or $F_4(r,s)$.

9) If $\Pi^\sharp$ is of type $F_4$, then $\Pi$ is of type $F_4(r,s)$.

10) If $\Pi$ has three different root lengths, then $\Pi$ must be of type $BC_l(r,s,t)$.
\end{proof}


\begin{remark}
In \cite{PX}, the same classification to braid-equivalent matrices was given for root systems of GIMs. In this paper, we have provided a different proof.
\end{remark}

\begin{remark}
Let $L$ be a simple Lie algebra of type $X_l$ with a prime root system $\{\alpha_1,\alpha_2,\cdots,\alpha_l\}$, and let $A$ be the Laurent polynomial algebra $\mc[t_1^{\pm1},\cdots,t_\nu^{\pm1}]$ and $\Omega=AdA/dA$. For convenience, we assume that $\alpha_1$ is a long root. As we know that a toroidal Lie algebra of type $X_l$ and nullity $\nu$ can be realized as following:

$$Tor(L)=L\otimes A\oplus\Omega\oplus\bigoplus_{i=1}^\nu\mc d_i.$$

The bracket is given by:
\beqs {[x\otimes t^{\bn},y\otimes t^{\bm}]}&=&[x,y]\otimes t^{\bn+\bm}+\sum_{i=1}^\nu t^{\bn}dt^{\bm},\\
 {[d_j,y\otimes t^{\bm}]}&=&m_jy\otimes t^{\bm},\\
 {[d_j,t^{\bn}dt^{\bm}]}&=&(n_j+m_j)y\otimes t^{\bm},\\
{[t^{\bn}dt^{\bm},y\otimes t^{\bm}]}&=&0,\\
{[t^{\bn}dt^{\bm},t^{\bk}dt^{\br}]}&=&0,\\
{[d_i,d_j]}&=&0,\eeqs
where $t^{\bn}=t_1^{n_1}\cdots t_\nu^{n_\nu}$ and $\sum_{i=1}^\nu n_it^{\bn}t_i^{-1}dt_i.$

Let $e_j=x_{\alpha_j}, f_j=x_{-\alpha_j}, h_j=\alpha_j^\vee$. It is easy to know that $Tor(L)$ can be generated by elements $\{e_1\otimes t_i, f_1\otimes t_i^{-1},e_j,f_j,h_j, d_i|i=1,\cdots,\nu, j=1,\cdots,l\}$

The roots of $\{e_1\otimes t_i,e_j|i=1,\cdots,\nu, j=1,\cdots,l\}$ forms a set $$\{\beta_1=\alpha_1+\delta_1,\cdots,\beta_\nu=\alpha_1+\delta_\nu,\beta_{\nu+1}=\alpha_1,\cdots,\beta_{n+\nu}=\alpha_n\}.$$
Let $$M=\left[\frac{2(\beta_i,\beta_j)}{(\beta_j,\beta_j)}\right]_{(\nu+n)\times(\nu+n)}=\left[\begin{array}{ccccccc}2&\cdots&2&a_{11}&a_{12}&\cdots&a_{1n}\\
\vdots&&\vdots&\vdots&\vdots&&\vdots\\
2&\cdots&2&a_{11}&a_{12}&\cdots&a_{1n}\\
a_{11}&\cdots&a_{11}&a_{11}&a_{12}&\cdots&a_{1n}\\
a_{21}&\cdots&a_{21}&a_{21}&a_{22}&\cdots&a_{2n}\\
\vdots&&\vdots&\vdots&\vdots&&\vdots\\
a_{n1}&\cdots&a_{n1}&a_{n1}&a_{n2}&\cdots&a_{nn}
\end{array}\right].$$
Where $[a_{ij}]_{n\times n}$ is the Cartan matrix of $L$.  It is easy to check that there exists an epimorphism from $\lm$ to $Tor(L)$.
\end{remark}

\noindent\\[4mm]

\noindent{\bbb Appendix: proof of example 3)}\\[0.1cm]

Let $$A=\left(\begin{array}{cccccccc}2&0&0&0&0&-2&-2&-2\\0&2&0&0&-2&0&-2&-2\\0&0&2&0&-2&-2&0&-2\\0&0&0&2&-2&-2&-2&0\\
0&-2&-2&-2&2&0&0&0\\-2&0&-2&-2&0&2&0&0\\-2&-2&0&-2&0&0&2&0\\-2&-2&-2&0&0&0&0&2\end{array}\right)$$
and $\Pi_A=\{\beta_1, \beta_2, \beta_3, \beta_4, \beta_5, \beta_6, \beta_7, \beta_8\}$.

Then Kac-Moody Lie algebra $L_A:=Lie(A)$ has standard generators $x_i, y_i, \kappa_i (1\leq i\leq 8)$, where $\kappa_i$'s span the Cartan subalgebra, $x_i$ has root $\beta_i$ and $y_i$ has root $-\beta_i$. By the definition of Kac-Moody algebra,
\beqs [x_1, [x_2, [x_3, x_8]]]\not=0.\eeqs

Define a degree on generators by:
\beqs \deg(x_i)=-\deg(y_i)=1, \deg(\kappa_i)=0,\eeqs
then the subalgebra $L_A^{\geq0}$ generated by $\kappa_i, x_i(1\leq i\leq 8)$ is a graded Lie algebra. Let $S_0$ be Cartan subalgebra and
\beqs S_1&=&\bigoplus_{i=1}^8\mc x_i,\eeqs
then
\beqs L_A^{\geq0}&=&\bigoplus_{k\geq0}S_k,\eeqs
where $S_k=\sum_{1\leq j\leq k-1}[S_{k-j},S_j]$ for all $k\geq 2$.

Let $g_M:=\emph{\textbf{gim}}(M)$ be the GIM Lie algebra with generators $e_i, f_i, h_i (1\leq i\leq 4)$,
where $e_i=e_{\alpha_i}, f_i=f_{\alpha_i}$ and $h_i=[e_i, f_i]$.

Note that $\lm$ naturally is quotient of $g_M$.

Next we define a degree on $g_M$ by
\beqs \deg(e_i)=\deg(f_i)=1, \deg(h_i)=0.\eeqs
 Also define
\beqs P_0&=&\bigoplus_{i=1}^4\mc h_i,\\
 P_1&=&P_0\oplus\bigoplus_{i=1}^4\mc e_i\oplus \bigoplus_{i=1}^4\mc f_i,\\
 P_k&=&P_{k-1}\oplus \sum_{1\leq j\leq k-1}[P_{k-j},P_j],\;\forall k\geq2,\eeqs
then $\lm$ has a filtration
\beqs P_{-1}:=\{0\}\subset P_0\subset P_1\subset P_2\subset\cdots.\eeqs

Let $G$ be the graded Lie algebra
\beqs G&=&\bigoplus_{k\geq0} G_k,\eeqs
where $G_k=P_k/P_{k-1}$. Let $E_i, F_i$ denote the image of $e_i, f_i$ in $G_1$, respectively.

Define a map  $\phi$ for $1\leq i\leq 4$:
\beqs h_i&\mapsto&\kappa_{i}-\kappa_{i+4},\\
E_i&\mapsto&x_i,\\
F_i&\mapsto&x_{i+4}.\eeqs

We claim that $\phi$ induces a Lie algebra injection from $G$ to $L_A^{\geq0}$.
As a special case, Berman proved that $g_M$ was a fixed point subalgebra of $L_A$(see \cite{Br}). In the proof of isomorphism, he constructed such graded algebras. In his proof, our claim holds,  then we infer that  $[e_1, [e_2, [e_3, f_4]]]\not=0$ in $g_M$.

Now let $I$ be the ideal of $g_M$ such that $\lm=g_M/I$. Note that $$[e_1, [e_2, [e_3, f_4]]]\in P_4,  [e_1, [e_2, [e_3, f_4]]]\not\in P_3.$$

Let $\Gamma=\oplus_{i=1}^4{\mathbb Z}\alpha_i$. For each ${\mathcal B}\sim\Pi$ and each $\alpha\in{\mathcal B}$, we claim that $\alpha\in\alpha_{i_0}+2\Gamma$ for some $i_0\in\{1,2,3,4\}$. This can be checked by the definitions of reflections and matrix $M$. So, if $[e_1, [e_2, [e_3, f_4]]]\in I$,
we have
\beqs \alpha_1+\alpha_2+\alpha_3-\alpha_4=\alpha\pm\beta\in\alpha_{i_0}+\alpha_{j_0}+2\Gamma\eeqs for some $i_0, j_0\in\{1,2,3,4\}$, which is a contradiction.
Then $[e_1, [e_2, [e_3, f_4]]]\not\in I$ and hence $[e_1, [e_2, [e_3, f_4]]]\not=0$ in $\lm$.

\noindent\\[4mm]

\noindent\bf{\footnotesize Acknowledgements}\quad\rm
{\footnotesize The authors are very thankful for the referee's carefully reading, suggestions and comments. The first author is also thankful to Prof. Peng Liangang for providing his  paper joint with Xu. This work is supported by the  NNSF of China (Grant No. 11001110, 11271131), the first author also is supported by  {\it Jiangsu Government Scholarship for Overseas Studies}.
}\\[4mm]

\noindent{\bbb{References}}
\begin{enumerate}
{\footnotesize \bibitem{ABnG}\label{ABnG} B. Allison, G. Benkart, Y. Gao. Lie algebras graded by the root system
$BC_r$, $r\geq2$.  Mem. AMS 2002, 751, x+158\\[-6.5mm]

\bibitem{BG}\label{BG} S. Bhargava and Y. Gao.  Realizations of BCr-graded intersection matrix
algebras with grading subalgebras of type $B_r, r\geq3$. Pacific Journal of Mathematics, 2013, 263(2): 257--281\\[-6.5mm]

\bibitem{BnZ}\label{BnZ} G. Benkart and E. Zelmanov.  Lie algebras graded by finite
root systems and intersection matrix algebras. Invent. Math., 1996, 126:1--45\\[-6.5mm]

\bibitem{Br}\label{Br} S. Berman.  On generators and relations for certain involutory
subalgebras of Kac-Moody Lie Algebras.  Comm. Alg. 1989, 17: 3165--3185\\[-6.5mm]

\bibitem{BrJT}\label{BrJT} S. Berman, E. Jurisich and S. Tan.  Beyond Borcherds Lie
Algebras and Inside, Trans. AMS, 2001, 353: 1183--1219\\[-6.5mm]

\bibitem{BrM}\label{BrM} S. Berman, R.V. Moody.  Lie algebras graded by finite root
systems and the intersection matrix algebras of Slowdowy.  Invent.
Math., 1992, 108: 323--347\\[-6.5mm]

\bibitem{C}\label{C} R. Carter.  Lie Algebras of Finite and Affine Type. Cambridge
Univ. Press, Cambridge, 2005\\[-6.5mm]

\bibitem{EMY}\label{EMY} S. Eswara Rao, R.V. Moody, T. Yokonuma. Lie algebras
and Weyl groups arising from vertex operator representations.
Nova J. of Algebra and Geometry, 1992, 1: 15--57\\[-6.5mm]

\bibitem{GbK}\label{GbK} O. Gabber,  V.G. Kac. On Defining Relations of Certain
Infinite-Dimensional Lie Algebras. Bull. AMS (N.S.), 1981, 5: 185--189\\[-6.5mm]

\bibitem{G}\label{G} Y. Gao.  Involutive Lie algebras graded by finite root systems and
compact forms of IM algebras. Math. Zeitschrift 1996, 223: 651--672\\[-6.5mm]

\bibitem{GX}\label{GX} Y. Gao, L. Xia. Finite-dimensional representations for a class of generalized intersection matrix algebras. arXiv:1404.4310v1\\[-6.5mm]

\bibitem{H}\label{H} J.E. Humphreys. Introduction to Lie Algebras and Representation
Theory. Springer, New York, 1972\\[-6.5mm]

\bibitem{J}\label{J} N. Jacobson. Lie Algebras. Inter. science, New York, 1962\\[-6.5mm]

\bibitem{K}\label{K} V.G. Kac. Infinite Dimensional Lie Algebras, 3rd edition. Cambridge
Univ. Press, Cambridge, 1990\\[-6.5mm]

\bibitem{L}\label{L} G. Lusztig. Introduction to quantum groups. Birkhauser, Boston, 1993\\[-6.5mm]

\bibitem{N}\label{N} E. Neher. Lie algebras graded by 3-graded root systems and Jordan
pairs covered by grids.  Amer. J. Math. 1996, 118(2): 439--491\\[-6.5mm]

\bibitem{P}\label{P} L. Peng. Intersection matrix Lie algebras and Ringel-Hall Lie
algebras of tilted algebras. Representations of Algebra Vol. I, II,
98--108, Beijing Normal Univ. Press, Beijing, 2002\\[-6.5mm]

\bibitem{PX}\label{PX} L. Peng, M. Xu. Symmetrizable intersection matrices and their root systems. arXiv:
0912.1024\\[-6.5mm]

\bibitem{S1}\label{S1} P. Slodowy.  Beyond Kac-Moody algebras and inside. Can. Math.
Soc. Conf. Proc. 1986, 5: 361--371\\[-6.5mm]

\bibitem{S2}\label{S2} P. Slodowy. Singularit\"{a}ten, Kac-Moody Lie-Algebren, assoziierte
Gruppen und Verallgemeinerungen.  Habilitationsschrift, Universit\"{a}t Bonn, March 1984\\[-6.5mm]

\bibitem{XP}\label{XP} M. Xu,  L. Peng. Symmetrizable intersection matrix Lie algebras.  Algebra Colloquium
2011, 18(4): 639--646\\[-6.5mm]
}
\end{enumerate}
\end{document}